\numberwithin{equation}{section}
\newtheorem{theorem}{Theorem}
\newtheorem{proposition}{Proposition}[section]
\newtheorem{remark}{Remark}
\newtheorem*{ack}{Acknowledgement}
\renewcommand{\epsilon}{\varepsilon}
\newcommand{\1}[1]{{\mathbf 1}{\{#1\}}}
\newcommand{\I}{\mathcal{I}}
\newcommand{\R}{\mathbb{R}}
\newcommand{\N}{\mathbb{N}}
\newcommand{\Z}{\mathbb{Z}}
\newcommand{\T}{\vec{T}} 
\newcommand{\PR}{\mathbb{P}}
\newcommand{\G}{\mathcal{G}}
\renewcommand{\phi}{\varphi}
\title[Local times of Markov Jump Processes]{Explicit formula for the density of local times of Markov Jump Processes}
\date{}
\author[R.~Huang]{Ruojun Huang}
\address{Ruojun Huang\\ Courant Institute of Mathematical Sciences, New York} \email{rh138@nyu.edu}
\author[D.~Kious]{Daniel Kious}
\address{Daniel Kious\\ NYU-ECNU Institute of Mathematical Sciences at NYU Shanghai} \email{daniel.kious@nyu.edu}
\author[V.~Sidoravicius]{Vladas Sidoravicius}
\address{Vladas Sidoravicius\\ NYU-ECNU Institute of Mathematical Sciences at NYU Shanghai, and Courant Institute of Mathematical Sciences, New York } \email{vs1138@nyu.edu}
\author[P.~Tarr\`es]{Pierre Tarr\`es}
\address{Pierre Tarr\`es\\ NYU-ECNU Institute of Mathematical Sciences at NYU Shanghai, Courant Institute of Mathematical Sciences, New York, and Centre National de la Recherche Scientifique, France} \email{tarres@nyu.edu}
\keywords{Markov Jump Process, density of local times, last exit trees, cycling numbers, modified Bessel function}
\begin{document}

\begin{abstract}
In this note we show a simple formula for the joint density of local times, last exit tree and cycling numbers of continuous-time Markov Chains on finite graphs, which involves the modified Bessel function of the first type. 
  \end{abstract}

\maketitle
\section{Introduction}\label{intro}

Let $\G=(V,E,\sim)$ be a nonoriented connected finite graph with no multiple edges, with conductances $W_e\in\mathbb{R}_+\setminus\{0\}$, $e\in E$. For any $i\in V$, let $W_i=\sum_{j\sim i}W_{ij}$.

Consider the associated Markov Jump Process $(X_t)_{t\ge0}$ started at $i_0\in V$, that is the continuous-time discrete-space random walk which jumps from a vertex $i\in V$ to a neighbor $j$ at rate $W_{ij}=W_{ji}$, i.e. with  generator 
\[
\mathcal{L}f(i)=\sum_{j\in V: j\sim i} W_{ij}(f(j)-f(i))\text{, for any }i\in V.
\]

Let $\vec{E}=\{(i,j) : \{i,j\}\in E\}$ be the set of directed edges, where each undirected edge in E is replaced by two directed edges with opposite directions. For any oriented spanning tree $\vec{T}$, we call its root the unique site from which no edge goes out. We denote by $\delta_i(j)=\1{i=j}$ the Kronecker delta.

Let $\I$ be the set of currents on the graph, i.e.
$$\I=\{a\in\Z^{\vec{E}} \,:\,a_{ji}=-a_{ij},\,i,\,j\,\in V\}.$$
For any $a\in\Z^{\vec{E}}$ and $i\in V$, we let $a_i=\sum_{j\sim i}a_{ij}$. If $a\in\I$, then $a_i$ can be interpreted as the divergence of $a$ at site $i$. 

For any $k\in\N^{\vec E}$, let $a(k)\in\I$ be defined by $a(k)_{ij}=k_{ij}-k_{ji}$. For any $a\in\I$ and any oriented spanning tree $\vec{T}$ of $G$, let $\tilde{a}$ be defined by
\begin{align*}
\tilde{a}_{ij}&=a_{ij}-\1{ij\in \vec{T}}+\1{ji\in \vec{T}},\,\,ij\in\vec{E}.
\end{align*}

For any $\sigma>0$ and any right-continuous path $x=(x(t))_{t\ge0}$, let us define $\ell(x,\sigma)\in\R_+^V$ as the vector of local times at time $\sigma$, that is,
 $$\ell(x,\sigma)_i=\int_{0}^\sigma\1{x(s)=i}\,ds,\,\,i\in V.$$
Let us also define $k(x,\sigma)=(k_{ij}(x,\sigma))_{(i,j)\in\vec{E}}$ the vector of oriented crossings up to time $\sigma$, that is, 
\begin{align}
k_{ij}(x,\sigma)=|\{t\le\sigma: x_{t-}=i,x_t=j\}|. 
\end{align}

Let $\vec{T}(x,\sigma)$ be the last-exit tree of the path $x$ on the interval $[0,\sigma]$, that is, the collection of directed edges taken by path $x$ for the last departures from all vertices visited in that time interval  except the endpoint $x(\sigma)$. In other words, $(i,j)\in\vec T(x,\sigma)$ if there exists $t\in(0,\sigma]$ such that $(x_{t-},x_t)=(i,j)$ and $x(s)\neq i$ for every $s\ge t$.

For any $\nu\in\R$, the modified Bessel function of the first kind is defined by 
\begin{align}\label{bessel}
I_\nu(z)=\sum_{k=0}^\infty\frac{1}{k!\Gamma(k+\nu+1)}\left(\frac{z}{2}\right)^{2k+\nu},\,\,z\in\R.
\end{align}
Recall that $I_\nu(z)=I_{-\nu}(z)$. Therefore, for any $a\in\I$, any nonoriented edge $e=\{i,j\}\in E$ and $z\in\R$, we define $I_{a_{e}}(z)=I_{a_{ij}}(z)=I_{a_{ji}}(z)$. 

The main result of this note is the following.
\begin{theorem}\label{th1}
Let $i_0$, $i_1$ $\in V$, $\sigma>0$, $\ell\in(\R_+\setminus\{0\})^V$, let  $\vec{T}$ be an oriented spanning tree of the graph with root $i_1$, and let $a\in\I$ such that $a_i=\delta_{i_0}(i)-\delta_{i_1}(i)$ for all $i\in V$. Then 
\begin{align*}
\PR_{i_0}&\Big(a(k(X,\sigma))=a,\ell(X,\sigma)\in(\ell,\ell+d\ell), \vec{T}(X,\sigma)=\vec{T}\Big)\\
=&e^{- \sum_{i\in V}W_i\ell_i}
\prod_{e=\{i,j\}\in E}I_{\tilde{a}_{ij}}\left(2W_{ij}\sqrt{\ell_{i}\ell_{j}}\right)\prod_{(i,j)\in\vec{T}}W_{ij}\prod_{i\in V}\ell_i^{\tilde{a}_i/2}d\ell_i \, .
\end{align*}
\end{theorem}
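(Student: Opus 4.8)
The plan is to disintegrate a trajectory into its discrete jump chain together with its sojourn times, and then to reassemble the three statistics $(a(k),\ell,\vec{T})$ from this data. First I would write down the elementary path density. A right-continuous trajectory on $[0,\sigma]$ started at $i_0$ is encoded by a vertex sequence $i_0=v_0,v_1,\dots,v_N=i_1$ and sojourn times $\tau_0,\dots,\tau_{N-1}$ at $v_0,\dots,v_{N-1}$, the final sojourn at $v_N=i_1$ being truncated at $\sigma$. Since the holding rate at $v_r$ is $W_{v_r}$ and the chance of jumping to $v_{r+1}$ is $W_{v_r v_{r+1}}/W_{v_r}$, while the last segment contributes the no-jump factor $e^{-W_{i_1}(\sigma-\sum_{r<N}\tau_r)}$, the joint density of the sequence and the sojourn times is
\[
\Big(\prod_{r=0}^{N-1}W_{v_r v_{r+1}}e^{-W_{v_r}\tau_r}\Big)e^{-W_{i_1}(\sigma-\sum_{r<N}\tau_r)}\prod_{r=0}^{N-1}d\tau_r .
\]
Collecting the exponentials gives $e^{-\sum_{i}W_i\ell_i}$, because $\sum_i W_i\ell_i$ is exactly the total rate-weighted time, and the jump weights assemble into $\prod_{(i,j)\in\vec{E}}W_{ij}^{k_{ij}}$.

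The central step, and the one I expect to be the main obstacle, is a purely combinatorial count of discrete walks. Fixing $k$ with divergence $a(k)_i=\delta_{i_0}(i)-\delta_{i_1}(i)$ and an oriented spanning tree $\vec{T}$ rooted at $i_1$, I would prove that the number of nearest-neighbour walks from $i_0$ to $i_1$ crossing each directed edge $(i,j)$ exactly $k_{ij}$ times and having last-exit tree $\vec{T}$ equals
\[
\frac{d_{i_1}!}{\prod_j k_{i_1 j}!}\prod_{i\ne i_1}\frac{(d_i-1)!}{(k_{i,T(i)}-1)!\prod_{j\ne T(i)}k_{ij}!},\qquad d_i:=\sum_j k_{ij},
\]
where $T(i)$ is the parent of $i$ in $\vec{T}$. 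This is a fixed-arborescence refinement of the BEST / van Aardenne-Ehrenfest--de Bruijn theorem: the last-exit condition forces, at each $i\ne i_1$, the final departure to follow the tree edge, so only the remaining $d_i-1$ departures at $i$ (and all $d_{i_1}$ at the root) may be ordered freely, whence the multinomials; the delicate point is that prescribing the last-exit tree to be a spanning arborescence toward $i_1$ is exactly what guarantees the greedy routing produces a single connected trail, with no further connectivity correction. I would establish this by the standard cycle/last-exit argument, reading the walk backwards from $i_1$ so that each vertex is left for the last time along its tree edge.

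With the count in hand I would integrate out the sojourn times at fixed visit numbers. For $i\ne i_1$ every visit is a complete $\mathrm{Exp}(W_i)$ sojourn and there are $d_i$ of them, so the density of $\ell_i=\sum_{v_r=i}\tau_r$ contributes $\ell_i^{d_i-1}/(d_i-1)!$; at $i_1$ the $d_{i_1}$ complete sojourns together with the truncated final one fill the remaining time, contributing $\ell_{i_1}^{d_{i_1}}/d_{i_1}!$ with $\ell_{i_1}=\sigma-\sum_{i\ne i_1}\ell_i$ determined (so the stated $\prod_i d\ell_i$ is read against the constraint $\sum_i\ell_i=\sigma$). The factorials $d_{i_1}!$ and $(d_i-1)!$ cancel exactly against those in the combinatorial count, and using $(k_{i,T(i)}-1)!=k_{i,T(i)}!/k_{i,T(i)}$ one is left, after reindexing $\tilde{k}_{ij}:=k_{ij}-\1{ij\in\vec{T}}$, with
\[
e^{-\sum_i W_i\ell_i}\prod_{(i,j)\in\vec{T}}W_{ij}\prod_{e=\{i,j\}}\frac{W_{ij}^{\tilde{k}_{ij}+\tilde{k}_{ji}}(\ell_i\ell_j)^{(\tilde{k}_{ij}+\tilde{k}_{ji})/2}}{\tilde{k}_{ij}!\,\tilde{k}_{ji}!}\prod_i\ell_i^{\tilde{a}_i/2},
\]
the extra $W_{ij}$ on tree edges producing precisely $\prod_{(i,j)\in\vec{T}}W_{ij}$, and the bookkeeping $\tfrac12\sum_j(\tilde{k}_{ij}+\tilde{k}_{ji})+\tfrac12\tilde{a}_i=\sum_j\tilde{k}_{ij}$ matching the $\ell_i$-powers.

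Finally I would sum over all $k$ compatible with the prescribed current $a$ and tree $\vec{T}$. Because $a$ fixes every $\tilde{a}_{ij}=\tilde{k}_{ij}-\tilde{k}_{ji}$, the surviving freedom is one nonnegative integer per undirected edge, namely $\min(\tilde{k}_{ij},\tilde{k}_{ji})$, and these are independent across edges. The per-edge sum
\[
\sum_{\tilde{k}_{ij}-\tilde{k}_{ji}=\tilde{a}_{ij}}\frac{W_{ij}^{\tilde{k}_{ij}+\tilde{k}_{ji}}(\ell_i\ell_j)^{(\tilde{k}_{ij}+\tilde{k}_{ji})/2}}{\tilde{k}_{ij}!\,\tilde{k}_{ji}!}
\]
is exactly the series \eqref{bessel} for $I_{\tilde{a}_{ij}}(2W_{ij}\sqrt{\ell_i\ell_j})$, so the product over edges yields the claimed Bessel factors and completes the proof. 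The only remaining checks are that $k_{i,T(i)}\ge1$, which is automatic when $\vec{T}$ is the last-exit tree, and the endpoint bookkeeping when $i_0=i_1$, both routine.
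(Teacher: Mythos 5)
Your proposal is correct and takes essentially the same route as the paper: your path-density plus sojourn-time integration combined with the fixed-arborescence multinomial count reproduces exactly the paper's intermediate result (Proposition \ref{ppn:joint-density}, whose proof likewise cites the last-exit counting technique of Keane--Rolles), and your final summation over $k$ via the reindexing $\tilde{k}_{ij}=k_{ij}-\1{ij\in \vec{T}}$ with one free nonnegative integer per undirected edge yielding the Bessel series is the paper's concluding step almost verbatim. The only differences are organizational (the paper isolates the fixed-$k$ density as a separate proposition, and conditions on the path first rather than counting walks first), not mathematical.
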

\begin{remark}
It is easy to extend Theorem \ref{th1} to the case where $\sigma$ is a stopping time
$$\sigma=\sigma_u^{i}=\inf\{t\ge0:\,\,\ell_{i}(t)>u\},\,u>0,\,i\in V,$$
in which case we replace $\prod_{i\in V}d\ell_i$ by $\prod_{i\in V\setminus\{i_0\}}d\ell_i$, since we impose $\ell_{i_0}=u$.
In the case where $\G=\mathbb{T}$ is a (finite) tree and if $X_{0}=i_{0}$, at time $\sigma_u^{i_0}$ and on the event that $\ell_i(X,\sigma_u^{i})>0$, for all $i\in V$, there is only one choice of last exit tree $\vec{T}(X,\sigma)$, which is $\mathbb{T}$ itself oriented towards $i_0$, and on the other hand $a(k(X,\sigma))={\bf 0}$. Hence in that case $\PR_{i_0}(\ell(X,\sigma_u^{i_0})_{V\setminus\{i_0\}}\in(\ell,\ell+d\ell))$ equals
\begin{align*}
e^{- \sum_{i\in V}W_i\ell_i}
\prod_{e=\{i,j\}\in E}W_{ij}I_{1}\left(2W_{ij}\sqrt{\ell_{i}\ell_{j}}\right)\prod_{i\in V\setminus\{i_0\}}d\ell_i \, .
\end{align*}

This is consistent with the second Ray-Knight theorem that relates the local times of Brownian motion on $\R$ at time $\sigma_0^u$ to a $0$-dimensional squared Bessel process.
\end{remark}

\begin{remark}
One could obtain a formula for the density of the local times alone by summing the formula obtained in Theorem \ref{th1} over all possible spanning trees and cycling numbers. For this purpose, one should realize that, for any fixed oriented spanning tree $\vec{T}$, any arbitrary choice of cycling numbers on $\vec{E}\setminus\vec{T}$ can be uniquely extended to cycling numbers on the whole graph via a linear map.
\end{remark}

Explicit formulas for the joint density of local times of continous-time Markov Chains were already proposed, see for instance \cite{L,BHK}. Merkl, Rolles and Tarr\`es proposed in \cite{MRT} a formula for the joint density of the oriented edge crossings, local times and last-exit tree for the Vertex-Reinforced Jump Process on a general graph, whose counterpart in the context of continuous-time Markov Chains is stated in Proposition \ref{ppn:joint-density} below. 

Le Jan independently obtained in Theorem 4.1 \cite{LJ}, in the context of loop soups $\mathcal{L}_1$ with intensity $1$, an expression for the joint density of the cycling numbers and local time, which also involves the first modified Bessel function. We can deduce that result from the construction of those loop soups by Wilson's algorithm, in the following manner. 

Let us first quickly recall that algorithm: we order all the sites of our finite graph $V=\{i_0,\ldots,i_{|V|}\}$, and we assume that the walk is transient with cemetery $\Delta$. We start a loop-erased Markov chain $\{\eta_1\}$ starting from $i_0$ and ending at $\Delta$. Then, from the next vertex in $V\setminus\{\eta_1\}$ we start a loop-erased Markov chain $\{\eta_2\}$ ending in $\{\eta_1\}\cup\{\Delta\}$, and so on. The union of all $\eta_i$ is a spanning tree, whose leaves are the starting sites of the successive loop-erased chains. 

Given a fixed spanning tree $T$, we can easily obtain a formula similar to the one in Theorem \ref{th1} for the joint density of the succession of Markov chains starting successively at all leaves of $T$ with respect to the order on sites given above and killed at cemetery $\Delta$. Now the loop soup extracted from that succession of Markov chains by Wilson's algorithm has the same local time at all sites (see for instance Chapter 8 \cite{LJ0}), its cycling numbers are $k=\tilde{a}$ after extraction of the spanning tree; $\tilde{a}$ satisfies $\tilde{a}_i=0$ for all $i\in V$, so that the term $\prod_{i\in V}\ell_i^{\tilde{a}_i/2}$ in the density is $1$. Summing $\prod_{ij\in T}W_{ij}$ over all spanning trees of $G$ yields a determinant by matrix-tree theorem, which enables to deduce Theorem 4.1 \cite{LJ}. 

\section{Proof of Theorem \ref{th1}}
We first show the following Proposition \ref{ppn:joint-density}. Its proof relies on an argument similar to the proof of Theorem 1.6 in \cite{MRT}; the technique for determining the cardinality of the set of paths with given last exit tree are from Lemma 6 by Keane and Rolles in \cite{KR}.
\begin{proposition}\label{ppn:joint-density}
Let $i_0$, $i_1$ $\in V$, $\sigma>0$, $\ell\in(\R_+\setminus\{0\})^V$, let  $\vec{T}$ be an oriented spanning tree of the graph with root $i_1$, and let $k\in\N^{\vec{E}}$ be such that $a(k)_i=\delta_{i_0}(i)-\delta_{i_1}(i)$ for all $i\in V$. Then 
\begin{align}
&\PR_{i_0}\Big(
k(X,\sigma)=k,\ell(X,\sigma)\in(\ell,\ell+d\ell), \vec{T}(X,\sigma)=\vec{T}\Big)\nonumber\\
&=e^{- \sum_{i\in V}W_i\ell_i}\prod_{ij\in \vec{E}}\frac{(W_{ij}\ell_i)^{k_{ij}}}{k_{ij}!}\prod_{ij\in \vec{T}}\frac{k_{ij}}{\ell_i}\prod_{i\in V}d\ell_i \, .\label{eq:density}
\end{align}
\end{proposition}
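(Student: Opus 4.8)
The plan is to condition on the discrete skeleton of the jump process — the sequence of vertices $v_0=i_0,v_1,\ldots,v_n=i_1$ visited before time $\sigma$ — and to split the computation into an analytic part (integrating out the holding times to produce the local-time density) and a combinatorial part (counting the skeletons compatible with prescribed $k$ and $\vec T$). For a fixed such sequence I would first recall the elementary density of the Markov jump process: the holding time at $v_{m-1}$ before the jump to $v_m$ has density $W_{v_{m-1}v_m}e^{-W_{v_{m-1}}s}$, and the process survives at $v_n=i_1$ on $[t_n,\sigma]$ with probability $e^{-W_{i_1}(\sigma-t_n)}$. Multiplying these, the product of jump rates collapses to $\prod_{ij\in\vec E}W_{ij}^{k_{ij}}$ while every exponential factor combines into $e^{-\sum_i W_i\ell_i}$, because $\sum_i W_i\ell_i$ is exactly the accumulated ``rate $\times$ duration'' along the path. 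Thus, against Lebesgue measure on the holding increments, the trajectory density is $e^{-\sum_i W_i\ell_i}\prod_{ij}W_{ij}^{k_{ij}}$, depending on the skeleton only through $k$.

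Next I would push this density forward onto the local-time vector $\ell$. Writing $N_i$ for the number of visits to $i$, the divergence hypothesis $a(k)_i=\delta_{i_0}(i)-\delta_{i_1}(i)$ forces $N_i=\sum_j k_{ij}+\delta_{i_1}(i)$. Grouping the holding increments by vertex and integrating out the way each $\ell_i$ is divided among its $N_i$ visits contributes the simplex (Dirichlet) volume factor $\prod_i\ell_i^{N_i-1}/(N_i-1)!$. Here the only delicate bookkeeping is the global constraint $\sum_i\ell_i=\sigma$, which makes one of the differentials in $\prod_{i\in V}d\ell_i$ redundant, so that \eqref{eq:density} is to be read on the hyperplane $\{\sum_i\ell_i=\sigma\}$; I would verify the exact normalization of this volume factor on a minimal two-vertex walk such as $a\to b\to a\to b$.

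The crux is then counting the skeletons from $i_0$ to $i_1$ with crossing numbers exactly $k$ and last-exit tree exactly $\vec T$. This is the de Bruijn / van Aardenne-Ehrenfest ``last exit'' combinatorics imported through Lemma~6 of Keane and Rolles: a skeleton is encoded by a linear order, at each vertex $v$, of its $\sum_j k_{vj}$ departures, and such a choice yields a single \emph{connected} walk precisely when the last departures at the non-root vertices form an arborescence toward $i_1$ — that is, when they equal $\vec T$. Fixing $\vec T$ pins the last departure at each $v\neq i_1$ to the tree edge $(v,p(v))$, where $p(v)$ is the parent of $v$ in $\vec T$, so the number of admissible orderings is $\frac{(N_v-1)!}{\prod_j k_{vj}!}\,k_{v,p(v)}$ at each non-root vertex and $\frac{(N_{i_1}-1)!}{\prod_j k_{i_1 j}!}$ at the root. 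Taking the product over all vertices gives the count $\frac{\prod_i(N_i-1)!}{\prod_{ij}k_{ij}!}\prod_{ij\in\vec T}k_{ij}$.

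Finally, multiplying this count by the trajectory density and the simplex volume factor, the factors $(N_i-1)!$ cancel, and the powers of $\ell_i$ reorganize — using $\sum_j k_{ij}=N_i-\delta_{i_1}(i)$ together with the fact that each non-root vertex has exactly one outgoing tree edge — into $\prod_{ij}\ell_i^{k_{ij}}\prod_{ij\in\vec T}\ell_i^{-1}$, so that \eqref{eq:density} drops out. I expect the combinatorial identity of the third step to be the main obstacle, namely the verification that the last-exit-tree condition is \emph{exactly} the connectivity condition on the departure orderings; once this is granted, the analytic and constraint-bookkeeping steps are routine.
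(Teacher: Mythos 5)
Your proposal is correct and follows essentially the same route as the paper's proof: the identical three-factor decomposition into the trajectory density $e^{-\sum_i W_i\ell_i}\prod_{ij}W_{ij}^{k_{ij}}$ given the skeleton and jump times, the simplex volume factor $\prod_i \ell_i^{N_i-1}/(N_i-1)!$ for distributing the local times, and the multinomial count of departure orderings with the last exit at each non-root vertex pinned to $\vec{T}$, imported from Lemma~6 of Keane--Rolles. Your explicit handling of the redundant differential coming from the constraint $\sum_i\ell_i=\sigma$ is a point the paper passes over silently, but it does not alter the argument.
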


\begin{proof}
It follows from a simple argument (similar but simpler than Lemma $1$ in \cite{KR}) that, for any $k\in\N^{\vec E}$ such that $a(k)=\delta_{i_0}-\delta_{i_1}$, there exists a path from $i_0$ to $i_1$ realizing the edge crossings prescribed by $k$. 
Consider adding to the event on the l.h.s. of \eqref{eq:density} the additional requirements that $(X_t)_{0\le t\le \sigma}$ takes a given path $\gamma=\{\gamma_0=i_0,\gamma_1,...,\gamma_{n-1},\gamma_n=i_1\}$ with given jump times $0<t_1<t_2<...<t_n<\sigma$, the probability turns out to be independent of such choices, and is equal to
\begin{align*}
\prod_{i\in V}\Big(W_i^{k_i}e^{- W_i\ell_i}\prod_{j\sim i}\Big(\frac{W_{ij}}{W_i}\Big)^{k_{ij}}\Big).
\end{align*}
The number of ways to distribute the jump times out of each vertex within its given local time contributes a multiplicative factor of
\begin{align*}
\Big(\prod_{i\neq i_1}\frac{\ell_i^{k_i-1}}{(k_i-1)!}\Big)\Big(\frac{\ell_{i_1}^{k_{i_1}}}{k_{i_1}!}\Big)\prod_{i\in V}d\ell_i.
\end{align*}
Further, with the last exit tree $\T$ fixed, the number of relative orders of exiting each vertex follows the multinomial distribution, thus contributing another factor of
\begin{align*}
\prod_{i\in V}\frac{(k_i-\1{i\neq i_1})!}{\prod_{j\sim i}(k_{ij}-\1{[i,j]\in \vec{T}})!}.
\end{align*}
Multiplying the preceding three displays proves the proposition.
\end{proof}

Let us now prove Theorem \ref{th1}. Let $a\in\I$ be such that $a_i=\delta_{i_0}(i)-\delta_{i_1}(i)$ for all $i\in V$.
For each nonoriented edge $e\in E$, let us choose a unique orientation $\vec{e}=(i,j)$ with $e=\{i,j\}$ such that $a_{ij}\ge0$, and let $E^+=\{\vec{e}:\,\,e\in E\}$. 

In order to compute the probability considered in the statement of the theorem, we need to sum all the contributions 
from Proposition \ref{ppn:joint-density} for all $k\in\N^{\vec E}$ such that $a(k)=a$. For each $ij\in E^+$, we sum over all $k_{ji}\ge0$ and $k_{ij}$ is determined by $k_{ij}=k_{ji}+a_{ij}\ge k_{ji}$. Therefore, using Proposition \ref{ppn:joint-density}, recalling that $\ell_i>0$ for every $i\in V$ and joining the contributions from $ij$ and $ji$ for each $ij\in E^+$, we have 
\begin{align*}
&\PR_{i_0}\Big(a(k(X,\sigma))=a,\ell(X,\sigma)\in(\ell,\ell+d\ell), \vec{T}(X,\sigma)=\vec{T}\Big)\\
&=e^{- \sum_{i\in V}W_i\ell_i}\sum_{(k_{ji})\in\N^{E^+}}\prod_{ij\in E^+}\frac{(W_{ij}\ell_j)^{k_{ji}}}{k_{ji}!}
\frac{(W_{ij}\ell_i)^{k_{ij}}}{k_{ij}!}
\prod_{ij\in \vec{T}}\frac{k_{ij}}{\ell_i}\prod_{i\in V}d\ell_i\\
&=e^{- \sum_{i\in V}W_i\ell_i}\prod_{ij\in E^+}\sum_{k_{ji}\ge\1{ji\in\T}}\frac{W_{ij}^{k_{ij}+k_{ji}}\ell_i^{k_{ij}-\1{ij\in\T}}\ell_j^{k_{ji}-\1{ji\in\T}}}{(k_{ij}-\1{ij\in\T})!(k_{ji}-\1{ji\in\T})!}\prod_{i\in V}d\ell_i
\end{align*}
In the second equality we use that, if $ji\in\vec{T}$, then the summand  is $0$ if $k_{ji}=0$. 

Let $k'_{ji}=k_{ji}-\1{ji\in\T}$. Then
\begin{align*}
k_{ij}-\1{ij\in\T}&=k'_{ji}+\tilde{a}_{ij}\\
k_{ij}+k_{ji}&=2k'_{ji}+\tilde{a}_{ij}+\1{ij\in T}
\end{align*}
where $T$ is the nonoriented tree associated to $\vec{T}$, so that 
\begin{align*}
&\sum_{k_{ji}\ge\1{ji\in\T}}\frac{W_{ij}^{k_{ij}+k_{ji}}\ell_i^{k_{ij}-\1{ij\in\T}}\ell_j^{k_{ji}-\1{ji\in\T}}}{(k_{ij}-\1{ij\in\T})!(k_{ji}-\1{ji\in\T})!}
=\sum_{k'_{ji}\ge0}\frac{W_{ij}^{2k'_{ji}+\tilde{a}_{ij}+\1{ij\in T}}\ell_i^{k'_{ji}+\tilde{a}_{ij}}\ell_{j}^{k'_{ji}}}{(k'_{ji}+\tilde{a}_{ij})!(k'_{ji})!}\\
&=W_{ij}^{\1{ij\in T}}\ell_i^{\tilde{a}_{ij}/2}\ell_j^{\tilde{a}_{ji}/2}	\sum_{k'_{ji}\ge0}\frac{(W_{ij}\sqrt{\ell_i\ell_j})^{2k'_{ji}+\tilde{a}_{ij}}}{(k'_{ji}+\tilde{a}_{ij})!(k'_{ji})!}=W_{ij}^{\1{ij\in T}}\ell_i^{\tilde{a}_{ij}/2}\ell_j^{\tilde{a}_{ji}/2}I_{\tilde{a}_{ij}}(2W_{ij}\sqrt{\ell_i\ell_j}).
\end{align*}
We conclude the proof by the observation that 
$$\prod_{ij\in E^+}\ell_i^{\tilde{a}_{ij}/2}\ell_j^{\tilde{a}_{ji}/2}=\prod_{i\in V}\ell_i^{\tilde{a}_i/2}.$$


\begin{ack}
RH would like to thank the NYU-ECNU Institute of Mathematical Sciences at NYU Shanghai for its hospitality during his visit in Fall 2016, when most of  this work was done.
\end{ack}

\end{document}